\documentclass{amiart}

\newtheorem{proposition}[theorem]{Proposition}
\definecolor{CORbleu}{RGB}{0,0,0}
\newcommand{\BLU}[1]{\textcolor{CORbleu}{#1}}

\title{Generalized Steinhaus triangles generated by canonical basis vectors:
periodicity and weight formulas}
\author{Randa Ouchene\inst1, Hac\`ene Belbachir\inst1}
\institute{
  \inst1 \BLU{USTHB, Faculty of Mathematics, RECITS Laboratory, Algiers, Algeria}\\
  \url{randa.ouchene1@uqac.ca}, \url{randaaouchene@gmail.com},
  \url{hbelbachir@usthb.dz},
  \url{hacenebelbachir@gmail.com}
}

\begin{document}
\maketitle

\begin{abstract}
A generalized Steinhaus $s$-triangle is obtained from a binary sequence by
repeatedly replacing each block of $s$ consecutive entries by its sum modulo
$2$. We study the triangles generated by canonical basis vectors. Expressing
their entries through bi$^{s}$nomial coefficients, we prove that the truncated
coefficient profiles of the successive rows are purely periodic and determine
their exact period. This periodicity yields a power-of-two derivative identity
and a decomposition into identical finite blocks. We consequently obtain an
exact recurrence for the number of ones and show that, on every residue class,
this weight is an affine function of the length. We also derive a rational
generating function, determine the asymptotic growth rate, and prove that these
canonical triangles have zero density of ones. Explicit formulas are obtained
for the first two canonical vectors and for the block weight in the classical
case. These results provide a unified description of the periodic and
enumerative structure of generalized Steinhaus triangles generated by
canonical basis vectors.
\end{abstract}

\keywords{Steinhaus triangle, generalized Pascal addition, canonical basis
vector, bi$^{s}$nomial coefficient, periodicity, weight formula.}

\section{Introduction}

Steinhaus triangles are generated from a finite binary sequence by repeated
adjacent addition modulo $2$. The classical balanced-triangle problem goes
back to Steinhaus and Harborth~\cite{4,3}. Several generalizations based on
different recurrence rules have since been considered, including generalized
Pascal addition~\cite{6} and Delannoy-type recurrences~\cite{ouchene2026delannoy}.
In the present study, we focus on the generalized Pascal rule, in which each
new entry is the sum modulo $2$ of $s$ consecutive entries. This construction
connects Steinhaus triangles with generalized Pascal triangles and with the
coefficients of powers of $1+x+\cdots+x^{s-1}$.

Several aspects of these structures are already understood. Brunat and
Maureso studied symmetries in ordinary and generalized Pascal
triangles~\cite{brunat2011}, while Harborth and Hurlbert investigated the
number of ones in general binary Pascal triangles~\cite{harborth2005}. For
the classical rule $s=2$, Brunat and Maureso also analyzed triangles
generated by canonical basis vectors~\cite{1}. On the algebraic side,
congruence, recurrence, and combinatorial properties of bi$^{s}$nomial
coefficients have been developed by Belbachir and
Igueroufa~\cite{belbachir2020a,belbachir2020b} and Belbachir and
Otmani~\cite{belbachir2024}. These works provide the tools required to study
canonical generators under generalized Pascal addition, but the exact row
periodicity and its enumerative consequences have not been isolated for this
family.

Our contribution is to fill this gap. We express the entries of a generalized
Steinhaus triangle in terms of bi$^{s}$nomial coefficients and encode the
relevant part of each row by a truncated coefficient profile. Repeated
squaring over $\mathbb F_2$ gives the exact period of these profiles. We use
this result to derive the canonical-vector derivative
identity, a repeated-block decomposition, and an exact affine formula on each
residue class of the length. The same framework yields a rational generating
function, asymptotic sparsity, closed formulas for the first two canonical
vectors, and an explicit specialization for the ordinary Pascal rule.

The remainder of the paper is organized as follows.
Section~\ref{sec:generalized-triangles} introduces generalized Steinhaus
triangles, bi$^{s}$nomial coefficients, and the entry formula used throughout
the paper. Section~\ref{sec:canonical-vectors} specializes this formula to
canonical basis vectors and establishes the first explicit weight formulas.
Section~\ref{sec:profile-periodicity} introduces the truncated row profiles
and determines their exact power-of-two period by repeated squaring over
$\mathbb F_2$. Section~\ref{sec:block-weights} uses this periodicity to obtain
the block decomposition, the weight recurrence, the affine residue-class
formula, the asymptotic density, the rational generating function, and the
classical specialization.
Finally, Section~\ref{sec:conclusion} summarizes the results and presents
directions for further research.

\section{Generalized Steinhaus triangles}
\label{sec:generalized-triangles}

Let $X=(a_0,a_1,\ldots,a_{n-1})$ be a binary sequence and let
$2\leq s\leq n$. Its $s$-derivative is
\[
\partial_sX=(a'_0,a'_1,\ldots,a'_{n-s}),
\qquad
a'_i=a_i+\cdots+a_{i+s-1}\pmod2.
\]
Thus $\partial_sX$ has length $n-s+1$. Put
\[
\ell=\left\lfloor\frac{n-1}{s-1}\right\rfloor.
\]
Starting with $\partial_s^0X=X$, define
$\partial_s^rX=\partial_s(\partial_s^{r-1}X)$ whenever the latter
derivative exists. The last row $\partial_s^\ell X$ has positive length
smaller than $s$. We denote by
\[
T^s(X)=(X,\partial_sX,\ldots,\partial_s^\ell X)
\]
the generalized Steinhaus $s$-triangle associated with $X$.

\begin{example}[A first generalized triangle]\label{ex:first-generalized}
Take $s=3$ and
\[
X=(0,0,1,0,0,0,0).
\]
Each entry of the next row is the sum modulo $2$ of a block of three
consecutive entries. For instance,
\[
(\partial_3X)_0=0+0+1=1,
\qquad
(\partial_3^2X)_1=1+1+0=0.
\]
The complete triangle is
\[
\begin{array}{c@{\qquad}ccccccc}
r=0&0&0&1&0&0&0&0\\
r=1&&1&1&1&0&0&\\
r=2&&&1&0&1&&\\
r=3&&&&0&&&
\end{array}
\]
and therefore
\[
|T^3(X)|=1+3+2+0=6.
\]
Unlike the classical case $s=2$, the length decreases here by
$s-1=2$ from one row to the next. The generating sequence in this
example is the canonical basis vector $e_2^{(7)}$, in the notation
introduced below.
\end{example}

The weight $|X|$ is the number of ones in $X$, and the weight of the
triangle is
\[
|T^s(X)|=\sum_{r=0}^{\ell}|\partial_s^rX|.
\]

\subsection{Bi$^{s}$nomial coefficients}

For nonnegative integers $r$ and $j$, the bi$^{s}$nomial coefficient
$\binom rj_s$ is defined by
\[
(1+x+\cdots+x^s)^r
=\sum_{j=0}^{rs}\binom rj_s x^j,
\]
with $\binom rj_s=0$ when $j<0$ or $j>rs$. \BLU{Thus the subscript always
indicates the degree of the underlying polynomial: for the $s$-term rule
studied in this paper, the entries of the triangles are the coefficients
$\binom{r}{j}_{s-1}$ of $(1+x+\cdots+x^{s-1})^{r}$.} These coefficients satisfy
the symmetry relation
\[
\binom rj_s=\binom r{rs-j}_s
\]
and the generalized Pascal recurrence
\[
\binom rj_s
=\binom{r-1}{j}_s+\binom{r-1}{j-1}_s+\cdots
+\binom{r-1}{j-s}_s.
\]

\begin{proposition}\label{prop:entry}
For every valid position $(r,c)$ of $T^s(X)$,
\begin{equation}\label{eq:general-entry}
T^s(X)(r,c)
=
\sum_{i=0}^{r(s-1)}
\binom ri_{s-1}a_{c+i}
\pmod2.
\end{equation}
\end{proposition}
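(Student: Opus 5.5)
We argue by induction on the row index $r$, with $s$ and $X$ fixed, and we verify \eqref{eq:general-entry} for every valid column $c$ of row $r$. Row $r$ has length $n-r(s-1)$, so its valid columns are $0\le c\le n-1-r(s-1)$. For such $c$ every index $c+i$ with $0\le i\le r(s-1)$ satisfies $c+i\le n-1$. Hence the right-hand side of \eqref{eq:general-entry} only involves entries of $X$, and no boundary convention is needed. For $r=0$ the sum reduces to $\binom 00_{s-1}a_c=a_c$, which is the definition of $\partial_s^0X=X$.

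For the inductive step, assume the formula holds for row $r-1$ and let $(r,c)$ be a valid position. By the definition of the $s$-derivative,
\[
T^s(X)(r,c)=\sum_{t=0}^{s-1}T^s(X)(r-1,c+t)\pmod2 .
\]
Each position $(r-1,c+t)$ with $0\le t\le s-1$ is valid in row $r-1$, because $c+t\le n-1-r(s-1)+(s-1)=n-1-(r-1)(s-1)$. So the induction hypothesis applies to each term, and we get
\[
T^s(X)(r,c)\equiv\sum_{t=0}^{s-1}\sum_{i=0}^{(r-1)(s-1)}\binom{r-1}{i}_{s-1}a_{c+t+i}.
\]
We reindex by $j=i+t$ and collect the coefficient of $a_{c+j}$ for $0\le j\le r(s-1)$. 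That coefficient is $\sum_{t=0}^{s-1}\binom{r-1}{j-t}_{s-1}$, where out-of-range coefficients are interpreted as $0$, following the convention stated with the definition. By the generalized Pascal recurrence with parameter $s-1$, this sum equals $\binom rj_{s-1}$. Equivalently, it is the coefficient extraction in $(1+\cdots+x^{s-1})^{r}=(1+\cdots+x^{s-1})^{r-1}(1+\cdots+x^{s-1})$. This gives \eqref{eq:general-entry} for row $r$.

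The only delicate point is the bookkeeping of index ranges. We must check that all shifted positions in the inductive step are valid, which was done above. We must also check that the double sum, after reindexing, covers exactly $0\le j\le r(s-1)$ with the correct coefficients, including the boundary values of $j$ where some terms $\binom{r-1}{j-t}_{s-1}$ vanish. Both follow from the zero convention for bi$^{s}$nomial coefficients outside $[0,(r-1)(s-1)]$. The congruence modulo $2$ is harmless, since the identity in fact holds over $\mathbb Z$ before reduction.
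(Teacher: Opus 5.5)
Your proof is correct and follows the same route as the paper: induction on the row index, expanding the $s$-derivative, substituting the induction hypothesis, collecting the coefficient of each $a_{c+j}$, and applying the generalized Pascal recurrence with parameter $s-1$. You also check explicitly that the shifted positions $(r-1,c+t)$ are valid and that every index $c+i$ stays inside $X$; the paper leaves this bookkeeping implicit.
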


\begin{proof}
The assertion is immediate for $r=0$. Assume that it holds in row
$r$. By the definition of the $s$-derivative,
\[
T^s(X)(r+1,c)=\sum_{q=0}^{s-1}T^s(X)(r,c+q).
\]
Substituting the induction hypothesis and collecting the coefficient
of each $a_{c+i}$ gives
\[
T^s(X)(r+1,c)
=
\sum_{i=0}^{(r+1)(s-1)}
\left(
\sum_{q=0}^{s-1}\binom r{i-q}_{s-1}
\right)a_{c+i}
\pmod2.
\]
The inner sum equals $\binom{r+1}{i}_{s-1}$ by the generalized
Pascal recurrence.
\end{proof}

\section{Canonical basis vectors}
\label{sec:canonical-vectors}

Let $n\geq1$ and $0\leq k\leq n-1$. The $k$-th canonical basis
vector is
\[
e_k^{(n)}=(\underbrace{0,\ldots,0}_{k},1,
\underbrace{0,\ldots,0}_{n-k-1}).
\]
We write
\[
T^s(k,n)=T^s(e_k^{(n)})
\qquad\text{and}\qquad
w_s(k,n)=|T^s(k,n)|.
\]
Applying Proposition~\ref{prop:entry} to $e_k^{(n)}$ gives
\begin{equation}\label{eq:canonical-entry}
T^s(k,n)(r,c)=
\binom r{k-c}_{s-1}\pmod2
\end{equation}
for every valid position $(r,c)$. In particular, all entries in
columns $c>k$ vanish.

The symmetry of the bi$^{s}$nomial coefficients gives
\[
T^s(n-1-k,n)(r,n-1-r(s-1)-c)=T^s(k,n)(r,c).
\]
Consequently,
\[
w_s(k,n)=w_s(n-1-k,n).
\]

\begin{proposition}\label{prop:first-vectors}
Let $a=s-1$.
\begin{enumerate}
\item For $n\geq s$,
\[
w_s(0,n)=\left\lceil\frac{n}{a}\right\rceil.
\]
\item For $n\geq s$, put $L=\lfloor(n-1)/a\rfloor$. Then
\[
w_s(1,n)=
\left\lceil\frac{n-1}{a}\right\rceil
+\left\lfloor\frac{L+1}{2}\right\rfloor.
\]
\end{enumerate}
\end{proposition}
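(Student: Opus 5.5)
Both parts follow from the canonical entry formula \eqref{eq:canonical-entry}: in row $r$ of $T^s(k,n)$ the only possibly nonzero entries are those with $0\le c\le k$, and there the entry is $\binom r{k-c}_{s-1}\bmod 2$. The entry must also lie at a valid position, which means $c\le n-1-ra$, because row $r$ has length $n-ra$. The plan is to count, row by row, the ones among the at most $k+1$ leftmost positions, and to track when the row becomes too short to contain them.

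\textbf{Part 1 ($k=0$).} Only column $c=0$ contributes. Its entry is $\binom r0_{s-1}=1$, so each row contributes exactly one, and $w_s(0,n)=\ell+1$ with $\ell=\lfloor (n-1)/a\rfloor$. It remains to check the identity $\lfloor (n-1)/a\rfloor+1=\lceil n/a\rceil$. Write $n-1=qa+t$ with $0\le t<a$. Then $\lceil n/a\rceil=\lceil (qa+t+1)/a\rceil=q+1$, since $1\le t+1\le a$.

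\textbf{Part 2 ($k=1$).} There are two columns to consider.

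For column $c=1$, the entry is $\binom r0_{s-1}=1$. Position $(r,1)$ is valid iff $1\le n-1-ra$, i.e. $r\le (n-2)/a$. So this column contributes $\lfloor (n-2)/a\rfloor+1=\lceil (n-1)/a\rceil$ ones, by the same identity as in Part 1 applied to $n-1$. This requires $n-1\ge1$, which holds since $n\ge s\ge2$.

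For column $c=0$, the entry is $\binom r1_{s-1}$, the coefficient of $x$ in $(1+x+\cdots+x^{a})^r$. Since $a\ge1$, this coefficient equals $r$. Hence the entry is $r\bmod 2$, and it is valid for all $0\le r\le L$. The number of odd $r$ in $\{0,\dots,L\}$ is $\lfloor (L+1)/2\rfloor$.

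Adding the two contributions gives the formula in Part 2.

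The only step needing care is the validity bookkeeping for column $1$: near the bottom, the last row may have length exactly $1$ and so contain column $0$ but not column $1$. This mismatch is exactly why the two summands in Part 2 use $n-1$ and $L=\lfloor (n-1)/a\rfloor$ respectively. I would verify this boundary case with a quick check, for example $s=3$, $n=5$: here $L=2$, the formula gives $2+1=3$, and this should be compared against the row lengths $5,3,1$.
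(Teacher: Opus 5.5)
Your proposal is correct and takes the same route as the paper. Both read the entries off the canonical entry formula. For $k=0$, column $0$ always carries a one. For $k=1$, column $1$ always carries a one and column $0$ carries $r \bmod 2$. In each case you then count the rows long enough to contain the column. The only difference is that you prove the identity $\lfloor (n-1)/a\rfloor+1=\lceil n/a\rceil$ and the validity bound $r\le (n-2)/a$ for column $1$, which the paper states without justification.
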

\begin{proof}
Set \(a=s-1\).

For \(k=0\), Equation~\eqref{eq:canonical-entry} gives
\[
\left(\partial_s^r e_0^{(n)}\right)_0
=
\binom{r}{0}_{s-1}
=
\binom{r}{0}
=
1.
\]
Thus every nonempty row has exactly one nonzero entry, located in
column \(0\). Since row \(r\) has length \(n-ar\), the admissible row
indices are
\[
0\leq r\leq \left\lfloor\frac{n-1}{a}\right\rfloor.
\]
Therefore, the number of nonempty rows is
\[
\left\lfloor\frac{n-1}{a}\right\rfloor+1
=
\left\lceil\frac{n}{a}\right\rceil,
\]
and hence
\[
w_s(0,n)=\left\lceil\frac{n}{a}\right\rceil.
\]

Now let \(k=1\). By Equation~\eqref{eq:canonical-entry}, the only
columns that may contain a nonzero entry are columns \(0\) and \(1\).

In column \(1\), we have
\[
\left(\partial_s^r e_1^{(n)}\right)_1
=
\binom{r}{0}_{s-1}
=
\binom{r}{0}
=
1.
\]
Hence column \(1\) contributes one in every row having length at least
\(2\). The number of such rows is
\[
\left\lceil\frac{n-1}{a}\right\rceil.
\]

In column \(0\), we have
\[
\left(\partial_s^r e_1^{(n)}\right)_0
=
\binom{r}{1}_{s-1}
=
\binom{r}{1}
=
r
\pmod 2.
\]
Indeed, to obtain total degree \(1\), one chooses the term \(x\) from
exactly one of the \(r\) factors and the constant term \(1\) from all
the others. There are therefore \(\binom r1=r\) such choices.

Consequently, the entry in column \(0\) is equal to \(1\) precisely
when \(r\) is odd. If
\[
L=\left\lfloor\frac{n-1}{a}\right\rfloor,
\]
then the nonempty rows are indexed by \(r=0,\ldots,L\). The number of
odd integers in this interval is
\[
\left\lfloor\frac{L+1}{2}\right\rfloor.
\]

Adding the contributions of columns \(1\) and \(0\), we obtain
\[
w_s(1,n)
=
\left\lceil\frac{n-1}{a}\right\rceil
+
\left\lfloor\frac{L+1}{2}\right\rfloor.
\]
\end{proof}

\section{Periodicity and recurrence of canonical-basis rows}
\label{sec:profile-periodicity}

Fix $s\geq2$ and $k\geq1$. For $r\geq0$, define the truncated
coefficient profile
\[
\mathbf v_r^{(s,k)}
=
\left(
\binom r0_{s-1},
\binom r1_{s-1},
\ldots,
\binom rk_{s-1}
\right)\pmod2
\]
and let $\rho_{s,k}(r)$ denote its Hamming weight. Whenever the
$r$-th row of $T^s(k,n)$ contains all columns from $0$ to $k$,
Equation~\eqref{eq:canonical-entry} shows that its weight is
$\rho_{s,k}(r)$.

\begin{theorem}\label{thm:profile-period}
Let $h=2^t$ be the least power of two strictly greater than $k$; thus
$2^{t-1}\leq k<2^t$. The sequence
\[
\left(\mathbf v_r^{(s,k)}\right)_{r\geq0}
\]
is purely periodic, and its least period is $h$.
\end{theorem}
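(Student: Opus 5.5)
Work in $\mathbb F_2[x]$ and set $P(x)=1+x+\cdots+x^{s-1}$. Then $\mathbf v_r^{(s,k)}$ is the coefficient vector of $P(x)^r \bmod x^{k+1}$, so the whole argument takes place in the ring $R=\mathbb F_2[x]/(x^{k+1})$. The plan has two steps.

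\textbf{Step 1: periodicity with period $h$.} In characteristic $2$ the Frobenius map is additive, so repeated squaring gives
\[
P(x)^{h}=P(x^{h})=1+x^{h}+x^{2h}+\cdots+x^{(s-1)h}.
\]
Because $h>k$, this is congruent to $1$ modulo $x^{k+1}$. Hence $P^{r+h}\equiv P^r \pmod{x^{k+1}}$ for every $r\ge0$, that is, $\mathbf v_{r+h}^{(s,k)}=\mathbf v_r^{(s,k)}$. This identity holds from $r=0$ onward, so the periodicity is pure. Equivalently, $P$ is a unit of $R$ whose multiplicative order divides $h$. (Here I use $s\ge2$, and $k\ge1$ so that $t\ge1$.)

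\textbf{Step 2: $h$ is the least period.} The sequence $r\mapsto P^r$ in $R$ is purely periodic, and its least period is exactly the order of $P$ in the unit group $R^\times$. This order divides $2^t$, so it suffices to show $P^{h/2}\not\equiv1 \pmod{x^{k+1}}$. Squaring again gives
\[
P^{h/2}=P(x^{2^{t-1}})=1+x^{2^{t-1}}+\cdots .
\]
Since $2^{t-1}\le k$, the monomial $x^{2^{t-1}}$ survives the truncation. Its coefficient is $1$, because $s\ge2$ and the exponents $j2^{t-1}$ are distinct, so no cancellation occurs. Therefore $\mathbf v_{h/2}^{(s,k)}\neq\mathbf v_0^{(s,k)}=(1,0,\dots,0)$. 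The least period divides $h$ and is not a divisor of $h/2$, so it equals $h$.

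The only point needing care is the justification that the least period of a purely periodic sequence of powers is the order of $P$ and hence divides $2^t$. Concretely: if $p$ is any period, then $P^p\equiv P^0=1$ in $R$, so $p$ is a multiple of the order. The order divides $h$ but does not divide $h/2$, so it equals $h$. I do not expect a serious obstacle beyond stating cleanly that squaring commutes with reduction modulo $x^{k+1}$.
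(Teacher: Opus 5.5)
Your proof is correct and follows the paper's argument. Periodicity comes from $P(x)^h=P(x^h)\equiv 1 \pmod{x^{k+1}}$, which the paper phrases via the generalized Vandermonde convolution, and minimality comes from a surviving monomial $x^{2^u}$ with $2^u\le k$. The paper handles an arbitrary period $d=2^u m$ directly, showing that the coefficient of $x^{2^u}$ in $P^d$ is $m\equiv 1$. You instead note that every period is a multiple of the order of $P$ in $(\mathbb F_2[x]/(x^{k+1}))^\times$, which divides $h$, so only $P^{h/2}$ needs checking.
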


\begin{proof}
Since $h=2^t$, repeated squaring modulo $2$ yields
\[
\begin{aligned}
\binom h0_{s-1}&\equiv1\pmod2,
&\binom hh_{s-1}&\equiv1\pmod2,\\
\binom h\ell_{s-1}&\equiv0\pmod2
&&\text{for }1\leq \ell<h.
\end{aligned}
\]
For $0\leq j\leq k<h$, the generalized Vandermonde convolution for
polynomial coefficients~\cite[Table~1, Vandermonde convolution]{Fahssi2012} gives
\[
\binom{r+h}{j}_{s-1}
=
\sum_{\ell=0}^{j}
\binom r{j-\ell}_{s-1}
\binom h\ell_{s-1}
\equiv
\binom rj_{s-1}
\pmod2.
\]
Thus $\mathbf v_{r+h}^{(s,k)}=\mathbf v_r^{(s,k)}$, and $h$ is a
period.

It remains to prove minimality. Suppose that $d$ is a period. Since
$\mathbf v_d^{(s,k)}=\mathbf v_0^{(s,k)}$, we have
\[
\binom dj_{s-1}=0\pmod2
\qquad(1\leq j\leq k).
\]
Write $d=qm$, where $q=2^u$ and $m$ is odd. If $q\leq k$, repeated
squaring modulo $2$ gives
\[
\binom q0_{s-1}\equiv\binom qq_{s-1}\equiv1\pmod2,
\qquad
\binom q\ell_{s-1}\equiv0\pmod2\quad(1\leq\ell<q).
\]
Applying the generalized Vandermonde convolution to the sum of $m$
copies of $q$, the only contributions that are nonzero modulo $2$ in
$\binom dq_{s-1}$ are obtained by
choosing degree $q$ from one copy and degree $0$ from all the others.
Hence
\[
\binom dq_{s-1}\equiv m\equiv1\pmod2,
\]
contradicting $q\leq k$. Therefore $q>k$, so $q\geq h$ and $h\mid d$.
Thus no smaller positive period exists.
\end{proof}

\begin{corollary}\label{cor:derivative}
Let $P=(s-1)h$. If $n\geq k+1+P$, then
\[
\partial_s^h e_k^{(n)}=e_k^{(n-P)}.
\]
\end{corollary}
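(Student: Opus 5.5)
The plan is to compare the two rows entry by entry using the canonical entry formula \eqref{eq:canonical-entry} together with the period-$h$ identities established in the proof of Theorem~\ref{thm:profile-period}.

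\textbf{Lengths.} Row $h$ of $T^s(k,n)$ exists and has length $n-(s-1)h=n-P$. The hypothesis $n\geq k+1+P$ gives $n-P\geq k+1\geq 1$. Thus $\partial_s^h e_k^{(n)}$ is a well-defined binary word of length $n-P$, and position $k$ is a valid column in it. It therefore suffices to show that its entry in column $c$ equals $1$ if $c=k$ and $0$ otherwise, for every $0\leq c\leq n-P-1$.

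\textbf{Entries.} By \eqref{eq:canonical-entry}, the entry in row $h$, column $c$ is $\binom{h}{k-c}_{s-1}\pmod 2$. I would then split into three cases.
\begin{itemize}
\item If $c>k$, the index $k-c$ is negative, so the coefficient vanishes.
\item If $c=k$, the coefficient is $\binom h0_{s-1}=1$.
\item If $0\leq c<k$, then $1\leq k-c\leq k<h$.
\end{itemize}
For the third case I would invoke the repeated-squaring fact already used in the proof of Theorem~\ref{thm:profile-period}: since $h=2^t$,
\[
(1+x+\cdots+x^{s-1})^{h}\equiv 1+x^{h}+\cdots+x^{(s-1)h}\pmod 2,
\]
so $\binom{h}{\ell}_{s-1}\equiv0$ for $1\leq \ell<h$. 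Hence every entry other than column $k$ is zero, and $\partial_s^h e_k^{(n)}=e_k^{(n-P)}$.

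\textbf{Main obstacle.} There is essentially no hard step. The only point requiring care is bookkeeping: checking that the hypothesis $n\geq k+1+P$ is exactly what guarantees that row $h$ exists and that column $k$ lies inside it. Alternatively, one can phrase the whole argument as $\mathbf v_h^{(s,k)}=\mathbf v_0^{(s,k)}=(1,0,\ldots,0)$ by Theorem~\ref{thm:profile-period}, combined with the vanishing of columns $c>k$. If $h>k$ is strict, the range $1\leq k-c<h$ is always covered, so there are no edge cases.
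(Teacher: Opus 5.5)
Your proposal is correct and is essentially the paper's argument. The paper reads off $\mathbf v_h^{(s,k)}=\mathbf v_0^{(s,k)}=(1,0,\ldots,0)$ from Theorem~\ref{thm:profile-period} and combines it with \eqref{eq:canonical-entry} and the length bound $n-P\geq k+1$; you apply directly the repeated-squaring congruence behind that equality, and you spell out the vanishing of columns $c>k$ and the existence of row $h$, which the paper leaves implicit.
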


\begin{proof}
By Theorem~\ref{thm:profile-period},
$\mathbf v_h^{(s,k)}=\mathbf v_0^{(s,k)}=(1,0,\ldots,0)$.
Equation~\eqref{eq:canonical-entry} then shows that row $h$ has its
unique nonzero entry in column $k$. Its length is $n-P\geq k+1$.
\end{proof}

\section{Block decomposition and weight formulas}
\label{sec:block-weights}

With the notation of Theorem~\ref{thm:profile-period}, define
\begin{equation}\label{eq:block-weight}
S_{s,k}=\sum_{r=0}^{h-1}\rho_{s,k}(r)
=
\sum_{r=0}^{h-1}\sum_{j=0}^{k}
\left(\binom rj_{s-1}\bmod2\right).
\end{equation}
Thus $S_{s,k}$ is computable from a finite coefficient block that
depends only on $s$ and $k$, not on $n$.

\BLU{The following table lists $S_{s,k}$ for $2\leq s\leq5$ and $1\leq k\leq8$.
\begin{center}
\begin{tabular}{c|cccccccc}
$s\backslash k$&1&2&3&4&5&6&7&8\\\hline
$2$&3&8&9&22&24&26&27&62\\
$3$&3&8&9&22&24&28&29&66\\
$4$&3&8&9&22&24&26&27&62\\
$5$&3&8&9&22&24&28&29&66
\end{tabular}
\end{center}
Note the coincidence $S_{4,k}=S_{2,k}$, which follows from the factorization
$1+x+x^{2}+x^{3}=(1+x)^{3}$ over $\mathbb F_2$ together with the fact that
multiplication by an odd constant permutes the residues modulo $2^{t}$, so
that $\binom{3r}{j}\bmod2$ and $\binom{r}{j}\bmod2$ have the same column
statistics over a full period.}

\begin{theorem}\label{thm:weight-formula}
Let $k\geq1$, let $h$ be the least power of two greater than $k$, and
put $P=(s-1)h$. Then
\begin{equation}\label{eq:weight-recurrence}
w_s(k,n)=w_s(k,n-P)+S_{s,k}
\end{equation}
for every $n\geq k+1+P$.

More precisely, for every $n\geq P$, write $n=qP+r$, where $q\geq1$
and $0\leq r<P$, and set $B_r=w_s(k,P+r)$. Then
\begin{equation}\label{eq:quasipolynomial}
w_s(k,n)=B_r+(q-1)S_{s,k}.
\end{equation}
\end{theorem}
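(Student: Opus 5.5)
The approach is to split the triangle $T^s(k,n)$ into its first $h$ rows and the remaining rows, and to identify the remaining rows, via Corollary~\ref{cor:derivative}, with a smaller canonical triangle.

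\textbf{The first $h$ rows.} Assume $n\geq k+1+P$. Row $r$ of $T^s(k,n)$ has length $n-(s-1)r$. For $0\le r\le h-1$ this gives
\[
n-(s-1)r\;\ge\; n-(s-1)(h-1)\;=\;n-P+(s-1)\;>\;k+1 .
\]
So each of these rows exists and contains all columns $0,\dots,k$. By Equation~\eqref{eq:canonical-entry} and the definition of $\rho_{s,k}$, row $r$ then has weight $\rho_{s,k}(r)$, and columns $c>k$ contribute nothing. Summing over $0\le r\le h-1$ gives exactly $S_{s,k}$, as defined in Equation~\eqref{eq:block-weight}.

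\textbf{The remaining rows.} Since $\partial_s^{h+m}X=\partial_s^m(\partial_s^hX)$, the rows of $T^s(k,n)$ with index $r\ge h$ are precisely the rows of $T^s(\partial_s^h e_k^{(n)})$. By Corollary~\ref{cor:derivative}, $\partial_s^h e_k^{(n)}=e_k^{(n-P)}$, so these rows form the triangle $T^s(k,n-P)$.

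I will check that the row counts match. With $\ell=\lfloor (n-1)/(s-1)\rfloor$, we have
\[
\left\lfloor\frac{n-P-1}{s-1}\right\rfloor=\ell-h ,
\]
so rows $h,\dots,\ell$ of the big triangle correspond bijectively to rows $0,\dots,\ell-h$ of $T^s(k,n-P)$. Also $n-P\ge k+1\ge 2$, so that triangle is well defined. Adding the two parts yields Equation~\eqref{eq:weight-recurrence}.

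\textbf{The residue-class formula.} I will induct on $q$ with $r$ fixed. For $q=1$, Equation~\eqref{eq:quasipolynomial} is just the definition of $B_r$. For $q\ge2$, we have $n=qP+r\ge 2P\ge P+h\ge P+k+1$, using $P=(s-1)h\ge h\ge k+1$. So the recurrence applies and gives
\[
w_s(k,n)=w_s(k,(q-1)P+r)+S_{s,k},
\]
and the induction hypothesis finishes the argument.

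The only delicate point is the bookkeeping in the first part: every one of the first $h$ rows must be long enough to contain all columns $0,\dots,k$, and the tail of $T^s(k,n)$ must coincide exactly, row for row, with $T^s(k,n-P)$. Both follow from the length computations above, so I expect no real obstacle beyond checking these inequalities carefully.
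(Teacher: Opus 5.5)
Your proof is correct and takes the same route as the paper. The first $h$ rows each contain columns $0,\dots,k$, so together they contribute exactly $S_{s,k}$; the remaining rows form $T^s(k,n-P)$ by Corollary~\ref{cor:derivative}; and iterating the recurrence gives the residue-class formula. Your row-count identity $\lfloor (n-P-1)/(s-1)\rfloor=\ell-h$ and your induction on $q$ spell out bookkeeping that the paper leaves implicit. One small point: when $n-P<s$, the smaller triangle has to be read as the single row $e_k^{(n-P)}$, since the paper's definition formally assumes $s\le n$. The paper relies on this convention without stating it.
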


\begin{proof}
By Corollary~\ref{cor:derivative}, deleting the first \(h\) rows of
\(T^s(k,n)\) leaves \(T^s(k,n-P)\). Since \(n-P\geq k+1\), each
deleted row contains columns \(0,\ldots,k\); its weight is therefore
\(\rho_{s,k}(i)\) in row \(i\). Hence the total weight of the deleted
rows is \(S_{s,k}\), which proves~\eqref{eq:weight-recurrence}.

Now write \(n=qP+r\), where \(q\geq1\) and \(0\leq r<P\). Since
\(P=(s-1)h\geq h>k\), the recurrence may be iterated until the
argument is \(P+r\). Thus
\[
w_s(k,n)=w_s(k,P+r)+(q-1)S_{s,k}
=B_r+(q-1)S_{s,k},
\]
proving~\eqref{eq:quasipolynomial}.
\end{proof}

\begin{corollary}\label{cor:asymptotic-density}
For fixed \(s\geq2\) and \(k\geq1\),
\[
\lim_{n\to\infty}\frac{w_s(k,n)}{n}
=\frac{S_{s,k}}{P}.
\]
Moreover, if
\[
N_s(n)=
\sum_{i=0}^{\lfloor(n-1)/(s-1)\rfloor}
\bigl(n-i(s-1)\bigr)
\]
denotes the total number of entries in a generalized \(s\)-triangle
of order \(n\), then
\[
\lim_{n\to\infty}\frac{w_s(k,n)}{N_s(n)}=0.
\]
The same zero-density conclusion holds for \(k=0\).
\end{corollary}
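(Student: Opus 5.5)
The plan is to read the first limit directly off the residue-class formula~\eqref{eq:quasipolynomial} of Theorem~\ref{thm:weight-formula}. The second limit then follows by comparing $w_s(k,n)$, which grows linearly in $n$, with $N_s(n)$, which grows quadratically.

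For $n\geq P$ write $n=qP+r$ with $q\geq1$ and $0\leq r<P$. Then $w_s(k,n)=B_r+(q-1)S_{s,k}$. The numbers $B_r=w_s(k,P+r)$, $0\leq r<P$, form a finite list that depends only on $s$ and $k$. Hence there is a constant $C$ with $|w_s(k,n)-qS_{s,k}|\leq C$ for all $n\geq P$. Since $q=(n-r)/P$ and $0\leq r<P$, we get $|q-n/P|<1$. Therefore
\[
\left|w_s(k,n)-\frac{n}{P}S_{s,k}\right|\leq C+S_{s,k},
\]
and dividing by $n$ and letting $n\to\infty$ gives $w_s(k,n)/n\to S_{s,k}/P$. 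The only point needing care is that the bound holds uniformly in the residue $r$, which is automatic because only finitely many residues occur.

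For the density statement, set $L=\lfloor(n-1)/(s-1)\rfloor$. This gives $L+1\geq n/(s-1)$. Each row $i\leq L/2$ has length $n-i(s-1)\geq n-(n-1)/2\geq n/2$, and there are at least $(L+1)/2$ such rows, roughly. So $N_s(n)\geq c\,n^2$ for some constant $c=c(s)>0$ and all large $n$. A cruder bound also works: $N_s(n)\geq\sum_{i\leq L}1\cdot(\text{row length})$, and at least half of the rows have length at least $n/2$. From $w_s(k,n)=O(n)$ and $N_s(n)\geq cn^2$ we conclude $w_s(k,n)/N_s(n)\to0$.

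For $k=0$, Proposition~\ref{prop:first-vectors} gives $w_s(0,n)=\lceil n/(s-1)\rceil=O(n)$, so the same quadratic lower bound on $N_s(n)$ yields zero density. No step here looks difficult. The mild obstacle is writing the lower bound $N_s(n)\gg n^2$ cleanly; if needed, one can compute $N_s(n)$ exactly as an arithmetic sum, $(L+1)n-(s-1)L(L+1)/2$, and note that it is asymptotic to $n^2/(2(s-1))$.
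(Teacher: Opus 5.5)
Your proposal is correct and follows the paper's proof. You read $w_s(k,n)=\frac{S_{s,k}}{P}\,n+O(1)$ off the residue-class formula~\eqref{eq:quasipolynomial}, using that only finitely many values $B_r$ occur, compare this with the quadratic growth of $N_s(n)$, and treat $k=0$ via Proposition~\ref{prop:first-vectors}. The only cosmetic difference is that you get the lower bound $N_s(n)\geq n^2/(4(s-1))$ by counting the first half of the rows, which is exact rather than rough since $L+1=\lceil n/(s-1)\rceil$, whereas the paper evaluates $N_s(n)=(\ell+1)n-(s-1)\ell(\ell+1)/2\sim n^2/(2(s-1))$ directly, as you also note.
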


\begin{proof}
Let
\[
n=qP+r,
\qquad q\geq 1,
\qquad 0\leq r<P.
\]
By~\eqref{eq:quasipolynomial}, we have
\[
w_s(k,n)=B_r+(q-1)S_{s,k}.
\]
Since \(q=(n-r)/P\), substituting this expression for \(q\) gives
\[
\begin{aligned}
w_s(k,n)
&=B_r+\left(\frac{n-r}{P}-1\right)S_{s,k}\\
&=\frac{S_{s,k}}{P}\,n+
\left(
B_r-S_{s,k}-\frac{S_{s,k}}{P}\,r
\right).
\end{aligned}
\]

The expression in parentheses depends only on the remainder \(r\).
Since \(0\leq r<P\), the integer \(r\) can take only the finitely many
values
\[
0,1,\ldots,P-1.
\]
Consequently, \(B_r\) also ranges over only finitely many values.
Therefore, there exists a constant \(C_{s,k}>0\), independent of \(n\),
such that
\[
\left|
B_r-S_{s,k}-\frac{S_{s,k}}{P}\,r
\right|
\leq C_{s,k}.
\]
Hence,
\[
w_s(k,n)=\frac{S_{s,k}}{P}\,n+O(1).
\]
Dividing by \(n\), we obtain
\[
\frac{w_s(k,n)}{n}
=
\frac{S_{s,k}}{P}
+
O\left(\frac1n\right).
\]
Since \(O(1/n)\) tends to zero as \(n\to\infty\), it follows that
\[
\lim_{n\to\infty}\frac{w_s(k,n)}{n}
=
\frac{S_{s,k}}{P}.
\]

We now consider the total number of entries in the generalized
Steinhaus triangle. Set
\[
a=s-1
\qquad\text{and}\qquad
\ell=\left\lfloor\frac{n-1}{a}\right\rfloor.
\]
The successive row lengths are
\[
n,\ n-a,\ n-2a,\ \ldots,\ n-\ell a.
\]
Thus, \(N_s(n)\) is the sum of these \(\ell+1\) row lengths:
\[
\begin{aligned}
N_s(n)
&=\sum_{i=0}^{\ell}(n-ia)\\
&=(\ell+1)n-a\sum_{i=0}^{\ell}i\\
&=(\ell+1)n-\frac{a\ell(\ell+1)}{2}.
\end{aligned}
\]
Moreover,
\[
\ell=\frac{n}{a}+O(1),
\]
because taking the integer part changes \((n-1)/a\) by less than one.
Substituting this estimate into the preceding expression yields
\[
N_s(n)=\frac{n^2}{2a}+O(n).
\]
Therefore, the total number of entries grows quadratically with \(n\),
whereas
\[
w_s(k,n)=\frac{S_{s,k}}{P}\,n+O(1)
\]
grows only linearly. Consequently,
\[
\frac{w_s(k,n)}{N_s(n)}
=
O\left(\frac1n\right),
\]
and hence
\[
\lim_{n\to\infty}\frac{w_s(k,n)}{N_s(n)}=0.
\]

Finally, when \(k=0\), we have
\[
w_s(0,n)=\left\lceil\frac{n}{a}\right\rceil
=\frac{n}{a}+O(1).
\]
Thus \(w_s(0,n)=O(n)\), while \(N_s(n)\) remains of order \(n^2\).
The same argument therefore gives
\[
\lim_{n\to\infty}\frac{w_s(0,n)}{N_s(n)}=0.
\]
\end{proof}
\begin{corollary}\label{cor:generating-function}
Let
\[
B_{s,k}(z)=\sum_{r=0}^{P-1}B_rz^r
\]
and
\[
\mathcal W_{s,k}(z)
=
\sum_{n\geq P}w_s(k,n)z^n.
\]
Then
\[
\mathcal W_{s,k}(z)
=
\frac{z^P B_{s,k}(z)}{1-z^P}
+
\frac{S_{s,k}z^{2P}}
{(1-z)(1-z^P)}.
\]
In particular, the ordinary generating function of the weight sequence
is rational.
\end{corollary}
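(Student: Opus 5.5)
Substitute the closed form~\eqref{eq:quasipolynomial} of Theorem~\ref{thm:weight-formula} into the series and sum directly. Every $n\geq P$ is written uniquely as $n=qP+r$ with $q\geq1$ and $0\leq r<P$. Hence
\[
\mathcal W_{s,k}(z)=\sum_{q\geq1}\sum_{r=0}^{P-1}\bigl(B_r+(q-1)S_{s,k}\bigr)z^{qP+r}.
\]
We split this into two double sums: one carrying $B_r$, the other carrying $(q-1)S_{s,k}$.

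The first sum factors as
\[
\Bigl(\sum_{q\geq1}z^{qP}\Bigr)\Bigl(\sum_{r=0}^{P-1}B_rz^r\Bigr)=\frac{z^P}{1-z^P}\,B_{s,k}(z),
\]
which is the first term of the claimed formula.

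For the second sum, first sum over $r$ using $\sum_{r=0}^{P-1}z^r=(1-z^P)/(1-z)$. This gives
\[
S_{s,k}\,\frac{1-z^P}{1-z}\sum_{q\geq1}(q-1)z^{qP}.
\]
With $m=q-1$ and $u=z^P$, the remaining series is
\[
u\sum_{m\geq0}m\,u^{m}=\frac{u^2}{(1-u)^2},
\]
so the second sum equals
\[
S_{s,k}\,\frac{1-z^P}{1-z}\cdot\frac{z^{2P}}{(1-z^P)^2}=\frac{S_{s,k}z^{2P}}{(1-z)(1-z^P)}.
\]
Adding the two sums gives the stated identity.

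Rationality follows at once, since both terms are quotients of polynomials. If one also wants the full series $\sum_{n\geq1}w_s(k,n)z^n$, it differs from $\mathcal W_{s,k}(z)$ only by the polynomial $\sum_{n<P}w_s(k,n)z^n$, so it is rational too.

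There is no real obstacle here; the computation is a direct summation, and the only point needing care is the bookkeeping of the shift $q-1$. It produces the factor $z^{2P}$ rather than $z^P$, and the factor $(1-z^P)$ from summing over $r$ cancels one power of $(1-z^P)$ in $u^2/(1-u)^2$. All manipulations are valid as formal power series, so no convergence discussion is needed.
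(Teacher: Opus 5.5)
Your proposal is correct and follows the paper's own proof essentially step for step. Like the paper, you substitute \eqref{eq:quasipolynomial}, factor the $B_r$ part as $B_{s,k}(z)\,z^P/(1-z^P)$, and evaluate the $S_{s,k}$ part using $\sum_{r<P}z^r=(1-z^P)/(1-z)$ and $\sum_{q\geq1}(q-1)z^{qP}=z^{2P}/(1-z^P)^2$; your remark that the full series differs from $\mathcal W_{s,k}(z)$ by a polynomial is a small, harmless clarification of the rationality claim.
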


\begin{proof}
By~\eqref{eq:quasipolynomial},
\[
\begin{aligned}
\mathcal W_{s,k}(z)
&=
\sum_{r=0}^{P-1}\sum_{q\geq1}
\bigl(B_r+(q-1)S_{s,k}\bigr)z^{qP+r}\\
&=
\left(\sum_{r=0}^{P-1}B_rz^r\right)
\left(\sum_{q\geq1}z^{qP}\right)\\
&\quad+
S_{s,k}
\left(\sum_{r=0}^{P-1}z^r\right)
\left(\sum_{q\geq1}(q-1)z^{qP}\right).
\end{aligned}
\]
Using
\[
\sum_{q\geq1}z^{qP}
=
\frac{z^P}{1-z^P},
\qquad
\sum_{q\geq1}(q-1)z^{qP}
=
\frac{z^{2P}}{(1-z^P)^2},
\]
and the finite geometric sum
\[
\sum_{r=0}^{P-1}z^r
=
1+z+\cdots+z^{P-1}
=
\frac{1-z^P}{1-z},
\]
we obtain
\[
\begin{aligned}
\mathcal W_{s,k}(z)
&=
B_{s,k}(z)\frac{z^P}{1-z^P}
+
S_{s,k}\frac{1-z^P}{1-z}
\frac{z^{2P}}{(1-z^P)^2}\\
&=
\frac{z^P B_{s,k}(z)}{1-z^P}
+
\frac{S_{s,k}z^{2P}}
{(1-z)(1-z^P)}.
\end{aligned}
\]
The right-hand side is a sum of quotients of polynomials and is
therefore rational.
\end{proof}
\subsection{The classical block weight}

\begin{corollary}\label{cor:classical-block}
For the classical rule \(s=2\),
\[
S_{2,k}
=
\sum_{j=0}^{k}2^{t-\operatorname{pop}_2(j)},
\]
where \(h=2^t>k\) and \(\operatorname{pop}_2(j)\) denotes the number of
ones in the binary expansion of \(j\). In particular, if
\(k=2^t-1\), then
\[
S_{2,k}=3^t.
\]
\end{corollary}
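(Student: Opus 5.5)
For $s=2$ the bi$^{s}$nomial coefficients are the ordinary binomial coefficients, so by \eqref{eq:block-weight}
\[
S_{2,k}=\sum_{j=0}^{k}\#\Bigl\{0\le r<2^t:\ \binom rj\equiv1\pmod2\Bigr\}.
\]
I would exchange the order of summation in this way and count, for each fixed column $j\le k<2^t$, the rows $r$ of one period in which $\binom rj$ is odd. The tool is Lucas' theorem (equivalently, repeated squaring $(1+x)^{2^i}\equiv1+x^{2^i}$ over $\mathbb F_2$). It gives $\binom rj\equiv1\pmod2$ exactly when the binary digits of $j$ form a subset of those of $r$, i.e.\ $j\,\&\,r=j$ bitwise.

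Next, restrict $r$ to $0\le r<2^t$, so that $r$ is determined by its $t$ binary digits. Since $j<2^t$, the $\operatorname{pop}_2(j)$ digit positions where $j$ has a one are forced to be one in $r$. The remaining $t-\operatorname{pop}_2(j)$ positions are free. Hence exactly $2^{t-\operatorname{pop}_2(j)}$ values of $r$ make $\binom rj$ odd, and summing over $j$ gives the formula.

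For the special case $k=2^t-1$, the index $j$ runs over all $t$-bit strings. Grouping them by popcount gives
\[
S_{2,k}=\sum_{i=0}^{t}\binom ti2^{t-i}=(1+2)^t=3^t
\]
by the binomial theorem. As a consistency check, this is the classical count of odd entries in the first $2^t$ rows of Pascal's triangle, since for $k=2^t-1$ the truncation at column $k$ loses nothing. It agrees with the table: $S_{2,1}=3$, $S_{2,3}=9$, $S_{2,7}=27$.

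The only step needing care is the appeal to Lucas' theorem. It is standard, but to stay self-contained I would derive it from $(1+x)^r=\prod_i(1+x^{2^i})^{r_i}$ over $\mathbb F_2$, where $r=\sum_i r_i2^i$ is the binary expansion. This product expands into distinct monomials $x^j$ indexed by the sub-digit-sets $j$ of $r$, each appearing exactly once, which is the same repeated-squaring fact already used in Theorem~\ref{thm:profile-period}.
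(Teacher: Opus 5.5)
Your proposal is correct and follows the paper's route: exchange the order of summation, use Lucas' theorem to count the $2^{t-\operatorname{pop}_2(j)}$ values of $r<2^t$ with $\binom rj$ odd, and evaluate the case $k=2^t-1$ as $(1+2)^t$. The paper gets $(2+1)^t$ as a product over the $t$ binary positions where you group by popcount and apply the binomial theorem, which is the same computation; your self-contained derivation of Lucas from $(1+x)^r=\prod_i(1+x^{2^i})^{r_i}$ over $\mathbb F_2$ is a detail the paper omits.
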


\begin{proof}
For \(s=2\), the generalized binomial coefficient appearing in
\eqref{eq:block-weight} reduces to the ordinary binomial coefficient:
\[
\binom{r}{j}_{1}=\binom{r}{j}.
\]
Therefore,
\[
S_{2,k}
=
\sum_{r=0}^{h-1}
\sum_{j=0}^{k}
\left(\binom{r}{j}\bmod 2\right).
\]
Since both sums are finite, their order may be exchanged:
\[
S_{2,k}
=
\sum_{j=0}^{k}
\#\left\{
0\leq r<h:
\binom{r}{j}\equiv1\pmod2
\right\}.
\]

Write the binary expansions of \(j\) and \(r\) as
\[
j=\sum_{\ell=0}^{t-1}j_\ell2^\ell,
\qquad
r=\sum_{\ell=0}^{t-1}r_\ell2^\ell,
\]
where \(j_\ell,r_\ell\in\{0,1\}\). By Lucas' theorem,
\[
\binom{r}{j}
\equiv
\prod_{\ell=0}^{t-1}
\binom{r_\ell}{j_\ell}
\pmod2.
\]
Consequently, \(\binom{r}{j}\) is odd if and only if
\[
j_\ell\leq r_\ell
\qquad
\text{for every }0\leq\ell<t.
\]
Equivalently, every binary position occupied by a \(1\) in \(j\)
must also be occupied by a \(1\) in \(r\).

If \(\operatorname{pop}_2(j)\) is the number of ones in the binary
expansion of \(j\), then these \(\operatorname{pop}_2(j)\) digits of
\(r\) are forced to equal \(1\). Each of the remaining
\(t-\operatorname{pop}_2(j)\) digits of \(r\) may independently be
chosen to be \(0\) or \(1\). Hence
\[
\#\left\{
0\leq r<h:
\binom{r}{j}\equiv1\pmod2
\right\}
=
2^{t-\operatorname{pop}_2(j)}.
\]
Substitution gives
\[
S_{2,k}
=
\sum_{j=0}^{k}
2^{t-\operatorname{pop}_2(j)}.
\]

Suppose now that \(k=2^t-1\). Then \(j\) runs through all binary
strings of length \(t\). For each binary position, a digit equal to
\(0\) in \(j\) contributes a factor \(2\), whereas a digit equal to
\(1\) contributes a factor \(1\). Therefore, the product rule over
the \(t\) independent binary positions gives
\[
\sum_{j=0}^{2^t-1}
2^{t-\operatorname{pop}_2(j)}
=
(2+1)^t
=
3^t.
\]
Thus
\[
S_{2,2^t-1}=3^t.
\]
\end{proof}

\subsection{Example: Weight computation for a large triangle}

Consider again the parameters $s=3$ and $k=2$ from
Example~\ref{ex:first-generalized}, now with $n=2019$. Here,
$h=4$ and $P=(s-1)h=8$. One complete block is
\[
\underbrace{
\begin{array}{ccc}
1&0&0\\
1&1&1\\
1&0&1\\
1&1&0
\end{array}}_{\mathbf v_0^{(3,2)},\ldots,\mathbf v_3^{(3,2)}}
\qquad\Longrightarrow\qquad
S_{3,2}=1+3+2+2=8.
\]

Moreover,
\[
2019=252\cdot8+3,
\]
so the required initial value is
\[
B_3=w_3(2,11).
\]
It is obtained from the following triangle:
\[
\underbrace{
\begin{array}{ccccccccccc}
0&0&1&0&0&0&0&0&0&0&0\\
&1&1&1&0&0&0&0&0&0&\\
&&1&0&1&0&0&0&0&&\\
&&&0&1&1&0&0&&&\\
&&&&0&0&1&&&&\\
&&&&&1&&&&&
\end{array}}_{T^3(e_2^{(11)})}
\qquad\Longrightarrow\qquad
B_3=1+3+2+2+1+1=10.
\]

Therefore, Theorem~\ref{thm:weight-formula} yields
\[
w_3(2,2019)
=
B_3+(252-1)S_{3,2}
=
10+251\cdot8
=
2018.
\]
\section{Conclusion}
\label{sec:conclusion}

We studied generalized Steinhaus triangles generated by canonical
basis vectors under $s$-term Pascal addition. The bi$^{s}$nomial
entry formula led to a truncated coefficient profile for each row.
Repeated squaring over $\mathbb F_2$ shows that these profiles are
purely periodic and determines their exact power-of-two period. This
periodicity yields the canonical-vector derivative identity.

The resulting block decomposition yields an exact recurrence for the
weight, an affine formula on every residue class of the length, and a
rational generating function. It also determines the asymptotic
growth rate and shows that triangles generated from a fixed canonical
position are sparse. The explicit formulas for $k=0$ and $k=1$, the
classical block formula, and the numerical example illustrate how the
general framework produces more specialized enumerative results.

Several questions remain open. The most immediate is to evaluate
$S_{s,k}$ explicitly from the binary expansions of $s$ and $k$, or
to obtain a finite automaton that computes it without constructing
the complete block. It would also be interesting to extend the method
from canonical vectors to sparse or structured generating sequences.
Finally, a graph-theoretic extension requires a fully
specified square-matrix construction, such as the generalized
Steinhaus graphs of Brand and Morton~\cite{brand1995,brand1996}; this
provides a natural direction for a separate study.

\sloppy
\section*{Funding}
The authors declare that no funds, grants, or other support were received during the preparation of this manuscript.

\section*{Competing interests}
The authors have no relevant financial or non-financial interests to disclose.

\section*{Declaration on the use of generative AI}
All mathematical content of this article is the authors’ own work. Generative AI (Claude, Anthropic) was used solely for language editing and formatting, under the authors’ full review and responsibility.

{\footnotesize\bibliographystyle{abbrv}
\bibliography{bib2}}

\end{document}